\font\myfont=cmr12 at 20pt
\title[Daubechies' localization operator on Cantor Type Sets]{\myfont{\textbf{Daubechies' Time-Frequency Localization Operator on Cantor Type Sets}}}
\author{Helge Knutsen}
\email{helge.knutsen@ntnu.no}
\date{July 2019}
\address{Department of Mathematical Sciences, Norwegian University of Science and \mbox{Technology,} \newline 7034 Trondheim, Norway}
\keywords{Fractal Uncertainty Principle, Daubechies' localization operator, Cantor set}
\subjclass[2010]{47A30, 47A75}
\theoremstyle{plain} 
\newtheorem{theorem}{Theorem}[section]
\newtheorem{lemma}[theorem]{Lemma}
\newtheorem{proposition}{Proposition}[section]
\numberwithin{equation}{section} 
\DeclarePairedDelimiter\floor{\lfloor}{\rfloor} 
\DeclareMathAlphabet{\pazocal}{OMS}{zplm}{m}{n}
\begin{document}

\maketitle



\begin{abstract}
We study Daubechies' time-frequency localization operator, which is characterized by a window and weight function. We consider a Gaussian window and a spherically symmetric weight as this choice yields explicit formulas for the eigenvalues, with the Hermite functions as the associated eigenfunctions.
Inspired by the fractal uncertainty principle in the separate time-frequency representation, we define the $n$-iterate spherically symmetric Cantor set in the joint representation. 
For the $n$-iterate Cantor set, precise asymptotic estimates for the operator norm are then derived up to a multiplicative constant. 
\end{abstract}


\section{Introduction}
The problem of localizing signals in time and frequency is an old and important one in signal analysis. In applications, we often wish to analyze signals on different time-frequency domains, and we would therefore attempt to concentrate signals on said domains. Different approaches for how to construct such \textit{time-frequency localization operators} have been suggested, either based on a separate or joint time-frequency representation of the signal (see \cite{Prolate_Spheroidal_Wave_FunctionI}, \cite{DaubechiesI.1988Tloa}). The localization operators, regardless of which we choose to work with, will however be limited by the fundamental barrier of time-frequency analysis, namely the \textit{uncertainty principles}, which state that a signal cannot be highly localized simultaneously in \textit{both} time and frequency. With regard to the localization operator, the limits posed by the uncertainty principles translate into the associated \textit{operator norm}, as it measures the optimal efficiency of any given localization operator.    

Many versions of the uncertainty principles exist (see \cite{FollandGerald1997TupA}), and more recent versions start to take into account the \textit{geometry} of the time-frequency domains. In particular, in \cite{DyatlovFUP2019}, Dyatlov describes the
development and applications of a \textit{fractal uncertainty principle} (FUP) for the separate time-frequency representation. The relevant localization operator is the standard composition of projections $\pi_{T}Q_{\Omega}$, where $\pi_{T}$ and $Q_{\Omega}$ projects onto the sets $T$ in time and $\Omega$ in frequency, respectively. In the context of the FUP, the sets $T$ and $\Omega$ take the form of fractal sets or exhibit a regularity close to it. This regularity is, in part, represented as a sequence of sets $\{ X_n \}_n$, where $|X_n| \to \infty$ as $n\to \infty$. However, this sequence is constructed such that $\|\pi_{X_n}Q_{X_n}\|_{\text{op}}\to 0$. 

An illustrative example featured in \cite{DyatlovFUP2019} is a sequence based on the $n$-iterate mid-third Cantor set, defined in an ever increasing interval. More precisely, if $X_n$ denotes the $n$-iterate defined in the interval $[0,M]$, then the interval length satisfies
\begin{align}
    3^n \sim M^2,
    \label{Dyatlov_condition}
\end{align}
which means
$|X_n| \sim \left({2}/{\sqrt{3}}\right)^n \to \infty$ as $n\to \infty$. Further, by Theorem 2.13 in \cite{DyatlovFUP2019}, there exist constants $\alpha, \beta >0$ such that 
\begin{align}
    \| \pi_{X_n} Q_{X_n}\|_{\text{op}} \leq \alpha e^{-\beta n} \ \ \forall \ n \geq 0. 
    \label{Dyatlov_thm213}
\end{align}

Extending to the joint time-frequency representation we should expect some analogous result to the FUP (see Itinerary page 1 in \cite{GrochenigKarlheinz2001Fota}). Inspired by the model example, we search for similar results in the joint representation. In particular, we consider \textit{Daubechies' localization operator}, first introduced in \cite{DaubechiesI.1988Tloa}, based on the \textit{Short-Time Fourier tranform}, with a \textit{spherically symmetric} weight function and a Gaussian window. The reason for these restrictions is, as was shown in the aforementioned paper, that we obtain explicit expressions for the eigenvalues of the localization operator, with the Hermite functions as the associated eigenfunctions.

The remainder of the paper is organized as follows: In section \ref{section_preliminaries} we provide a more detailed introduction to the Daubechies operator (section \ref{section_Fourier_and_STFT}-\ref{section_Daubechies_loc_op}), in addition to some necessary results in the spherically symmetric context (section \ref{section_spherically_symmetric_weight}). We also make clear what we mean by a spherically symmetric Cantor set (section \ref{section_cantor_set}). New results are found in section \ref{section_main_results}, which contains several estimates for the operator norm of Daubechies' localization operator. 

After some preliminary examples, in section \ref{section_results_ring}-\ref{section_results_set_infinite_measure}, we finally consider the $n$-iterate spherically symmetric Cantor set in section \ref{section_cantor_set}. Here we derive \textit{precise} asymptotic estimates (up to a multiplicative constant) for the operator norm of the associated Daubechies' operator.
A particular case of this two-parameter result, in terms of the radius $R$ and iterate $n$, can be formulated as an estimate solely in terms of the parameter $n$. In the spherically symmetric context, we consider the condition
\begin{align}
    3^n \sim \left(\pi R^2 \right)^2,
    \label{condition_one_parameter_problem}
\end{align}
similar to condition \eqref{Dyatlov_condition}.
Hence, under the above condition, let $P_n$ denote the Daubechies operator localizing on the $n$-iterate spherically symmetric Cantor set defined in the disk of radius $R>0$. Then for some positive constants $c_1 \leq c_2$ the operator norm satisfies
\begin{align}
   c_1 \left(\frac{2}{3}\right)^{n/2}\leq \| P_n\|_{\text{op}} \leq c_2 \left(\frac{2}{3}\right)^{n/2}.
    \label{one_parameter_problem_result}
\end{align}
This result is analogous to knowing the exponential $\beta>0$ in \eqref{Dyatlov_thm213} precisely.


\section{Preliminaries}
\label{section_preliminaries}
\subsection{Fourier and Short-Time Fourier Transform}
\label{section_Fourier_and_STFT}
For a function
$f:\mathbb{R}\rightarrow\mathbb{C}$ the \textit{Fourier transform} evaluated at point $\omega\in \mathbb{R}$ is given by 
\begin{align*}
    \hat{f}(\omega) = \int_{\mathbb{R}}f(t)e^{-2\pi i \omega t}\mathrm{d}t.
\end{align*}
If we interpret $f$ as an amplitude signal depending on \textit{time}, then its Fourier transform $\hat{f}$ corresponds to a \textit{frequency} representation of the signal. The pair $(f,\hat{f})$ does not, however, offer a joint description with respect to both frequency and time. For this purpose, we consider the \textit{Short-Time Fourier transform} (STFT) (see Chapter 3 in \cite{GrochenigKarlheinz2001Fota}).

The STFT is often referred to as the "windowed Fourier transform" as this transform relies on an additional fixed, non-zero function, $\phi:\mathbb{R}\rightarrow \mathbb{C}$, known as a \textit{window function}. At point $(\omega,t)\in \mathbb{R}\times\mathbb{R}$ the STFT of $f$ with respect to the window $\phi$ is then defined as 
\begin{align*}
    V_{\phi}f(\omega,t) = \int_{\mathbb{R}}f(x)\overline{\phi(x-t)}e^{-2\pi i \omega x}\mathrm{d}x.
\end{align*}
The transformed signal now depends on both time $t$ and frequency $\omega$, and we refer to the $(\omega,t)$-domain $\mathbb{R}^2$ as the \textit{phase space} or the \textit{time-frequency plane}.

We will restrict our attention to signals and windows in $L^2(\mathbb{R})$, which, by Cauchy-Schwarz' inequality, ensures $V_{\phi}f(\omega,t)$ to be well-defined for all points $(\omega,t)\in \mathbb{R}^2$. Such restrictions also produce the following orthogonality relation
\begin{align}
    \langle V_{\phi_1}f_1, V_{\phi_2}f_2 \rangle_{L^2(\mathbb{R}^{2})}
    &=\langle f_1, f_2 \rangle \overline{\langle \phi_1, \phi_2 \rangle} \ \ \forall \ f_1,f_2,\phi_1,\phi_2 \in L^2(\mathbb{R}).
    \label{Orthogonality_relation}
\end{align}
Equipped with the standard $L^2$-norms, we deduce that the STFT is a bounded linear map onto the target space $L^2(\mathbb{R}^2)$. If the window $\phi$ is normalized, i.e. $\|\phi\|_2 =1$, then the STFT becomes, in fact, an \textit{isometry} onto some subspace of $L^2(\mathbb{R}^2)$.

Further, by identity \eqref{Orthogonality_relation}, the original signal $f$ can be recovered from its phase space representation. Take any $\gamma \in L^2(\mathbb{R})$ such that $\langle \gamma,\phi \rangle \neq 0$, then the orthogonal projection of $f$ onto any $g\in L^2(\mathbb{R})$ is given by
\begin{align*}
    \langle f, g \rangle = \frac{1}{\langle \gamma, \phi \rangle}\iint_{\mathbb{R}^{2}} V_{\phi}f(\omega,t) \overline{V_{\gamma}g(\omega,t)} \mathrm{d}\omega\mathrm{d}t.
\end{align*}
A canonical choice for $\gamma$ is to set it equal to $\phi$. Assuming $\phi$ is normalized, these projections then read
\begin{align}
    \langle f, g \rangle &= \iint_{\mathbb{R}^{2}}V_{\phi}f(\omega,t)\overline{V_{\phi}g(\omega,t)}\mathrm{d}\omega\mathrm{d}t.
    \label{inversion_formula_normalized}
\end{align}
Since any signal $f \in L^2(\mathbb{R})$ is entirely determined by such inner products, the right-hand side of formula \eqref{inversion_formula_normalized} provides a complete recovery from the STFT.

\subsection{Daubechies' Localization Operator}
\label{section_Daubechies_loc_op}
One approach for how to construct operators that localize a signal $f$ in both time and frequency was suggested by Daubechies in \cite{DaubechiesI.1988Tloa}. These operators can be summarized as modifying the STFT of $f$ by multiplication of a weight function, say $F(\omega,t)$, before recovering a time-dependent signal. The weight function aims at enhancing certain features of the phase space while diminishing others. 
Based on formula \eqref{inversion_formula_normalized}, we consider the sesquilinear functional
$\mathcal{P}_{F,\phi}$ on the product $L^2(\mathbb{R})\times L^2(\mathbb{R})$, defined by
\begin{align*}
    \mathcal{P}_{F,\phi}(f,g) = \iint_{\mathbb{R}^{2}}F(\omega,t) V_\phi f(\omega,t) \overline{V_\phi g(\omega,t)} \mathrm{d}\omega\mathrm{d}t. 
\end{align*}
Assuming $\mathcal{P}_{F,\phi}$ is a bounded functional, \textit{Riesz' representation theorem}
ensures the existence of a bounded, linear operator $P_{F,\phi}: L^2(\mathbb{R})\rightarrow L^2(\mathbb{R})$ such that
\begin{align*}
    \mathcal{P}_{F,\phi}(f,g) = \langle P_{F,\phi} f, g \rangle. 
\end{align*}
The operator $P_{F,\phi}$ is our sought after time-frequency localization operator, which we will refer to as \textit{Daubechies' localization operator}.
From the above defninition, $P_{F,\phi}$ is characterized by the choice of weight $F$ and window function $\phi$. 

In particular, any real-valued, integrable weight $F$ will produce self-adjoint, compact operators $P_{F,\phi}$ whose eigenfunctions form a complete basis for the space $L^2(\mathbb{R})$. Furthermore, the eigenvalues $\{\lambda_k\}_k$ satisfies $\sum_{k}|\lambda_k|\leq \|F\|_{1}$, in addition to $|\lambda_k|\leq \|F\|_{\infty}$ for all $k$.

\subsection{Spherically Symmetric Weight}
\label{section_spherically_symmetric_weight}
For an arbitrary weight $F$ and window $\phi$ it remains a challenge to determine the eigenvalues of Daubechies' localization operator $P_{F,\phi}$. However, in \cite{DaubechiesI.1988Tloa}, Daubechies narrows in her focus to operators with a normalized Gaussian window
\begin{align}
    \phi(x) = 2^{1/4}e^{-\pi x^2}, 
    \label{normalized_Gaussian}
\end{align}
and a spherically symmetric weight 
\begin{align}
    F(\omega, t) = \mathcal{F}(r^2)
    \label{def_spherically_symmetric_weight},
\end{align}
where $r^2 = \omega^2+t^2$.
For such operators, the Hermite functions\footnote{Due to the choice of normalization for the Fourier transform, both the Gaussian and the Hermite functions are normalized differently than in \cite{DaubechiesI.1988Tloa}. The normalization is chosen in accordance with Folland\cite{FollandGeraldB1989Haip}. If $h_k$ denotes the $k$-th Hermite function in \cite{DaubechiesI.1988Tloa}, this relates to $H_k$ in \eqref{Hermite_functions_one_dimensional} by $H_k(x) =\frac{2^{1/4}}{\sqrt{2^k k!}}h_k(\sqrt{2\pi}x)$.
} 
\begin{align}
    H_k(t) = \frac{2^{1/4}}{\sqrt{k!}}\left(-\frac{1}{2\sqrt{\pi}}\right)^{k}e^{\pi t^2}\frac{\mathrm{d}^k}{\mathrm{d}t^k}(e^{-2\pi t^2}),\ k=0,1,2,\dots
    \label{Hermite_functions_one_dimensional}
\end{align}
are shown to constitute the eigenfunctions. Further, explicit expressions for the associated eigenvalues $\{\lambda_k\}_k$ are derived.

\begin{theorem}
\textnormal{(Daubechies)} Let $P_{F,\phi}$ denote the localization operator with weight $F(\omega, t) = \mathcal{F}(r^2)$ and window $\phi$ equal to the normalized Gaussian in \eqref{normalized_Gaussian}. Then the eigenvalues of $P_{F,\phi}$ are given by
\begin{align*}
    \lambda_k = \int_{0}^{\infty}\mathcal{F}\left(\frac{r}{\pi}\right)\frac{r^{k}}{k!}e^{-r}\mathrm{d}r, \ \text{for } k=0,1,2,\dots, 
\end{align*}
such that
\begin{align*}
    P_{F,\phi}H_k = \lambda_k H_k, 
\end{align*}
where $H_k$ denotes the $k$-th Hermite function.
\label{theorem_derivation_spectrum_gaussian_window}
\end{theorem}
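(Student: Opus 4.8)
The plan is to compute the matrix element $\langle P_{F,\phi} H_j, H_k\rangle$ directly from the definition of the sesquilinear functional, using the known formula for the STFT of a Hermite function with the Gaussian window. First I would recall (or verify via the generating function for Hermite functions) that $V_\phi H_k(\omega,t)$ equals a constant times $(t+i\omega)^k$ (up to normalization conventions, perhaps $(\omega - it)^k$ or a conjugate) multiplied by the Gaussian $e^{-\pi(\omega^2+t^2)}$ times a phase factor. Concretely, writing $z = \omega + it$ and $|z|^2 = r^2 = \omega^2 + t^2$, one has $V_\phi H_k(\omega,t) = c_k\, \bar z^{\,k} e^{-\pi |z|^2/1}\,e^{i\pi \omega t}$ for suitable constants $c_k$ with $|c_k|^2 = \pi^k/k!$ after accounting for the chosen normalization; the phase and the Gaussian are common to all $k$, while only the polynomial factor carries the index. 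This is the structural fact that makes the Hermite functions diagonalize any radial weight.

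Next I would substitute this into
\[
\langle P_{F,\phi} H_j, H_k\rangle = \iint_{\mathbb{R}^2} \mathcal{F}(\omega^2+t^2)\, V_\phi H_j(\omega,t)\, \overline{V_\phi H_k(\omega,t)}\, \mathrm{d}\omega\,\mathrm{d}t,
\]
so that the phase factors $e^{i\pi\omega t}$ cancel and the integrand becomes $c_j \bar c_k\, \mathcal{F}(r^2)\, \bar z^{\,j} z^{\,k} e^{-2\pi r^2}$. Passing to polar coordinates $z = re^{i\theta}$, the angular integral $\int_0^{2\pi} e^{i(k-j)\theta}\,\mathrm{d}\theta$ vanishes unless $j=k$, which simultaneously establishes that $H_k$ is an eigenfunction (off-diagonal entries are zero) and isolates the eigenvalue. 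For $j=k$ the angular integral contributes $2\pi$, and the radial integral is $2\pi |c_k|^2 \int_0^\infty \mathcal{F}(r^2) r^{2k} e^{-2\pi r^2} r\,\mathrm{d}r$.

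Finally I would perform the substitution $r = \sqrt{u/\pi}$, equivalently $u = \pi r^2$, so that $r^2 = u/\pi$, $r\,\mathrm{d}r = \mathrm{d}u/(2\pi)$, and $r^{2k} = (u/\pi)^k$. This turns the radial integral into a constant multiple of $\int_0^\infty \mathcal{F}(u/\pi)\, u^k e^{-2u}\,\mathrm{d}u$; combining with $|c_k|^2 = \pi^k/k!$ and the factor $2\pi \cdot \tfrac{1}{2\pi}$, and tracking the powers of $\pi$ carefully, the constants collapse to give exactly $\lambda_k = \int_0^\infty \mathcal{F}(r/\pi)\, \tfrac{r^k}{k!} e^{-r}\,\mathrm{d}r$ after one more trivial rescaling $r \mapsto 2u$ to match the stated form (the $e^{-2u}$ versus $e^{-r}$ discrepancy and the $2^{k+1}$ it generates must cancel against the normalization, so I would double-check the constant in $V_\phi H_k$). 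The main obstacle is bookkeeping: getting every normalization constant right — the $2^{1/4}$ in the Gaussian, the $\sqrt{k!}$ and powers of $2\sqrt\pi$ in the Hermite functions, and the Fourier-transform convention — so that the final constant is precisely $1/k!$ with no stray powers of $2$ or $\pi$. I would handle this by first checking the case $k=0$ against the normalization $\|\phi\|_2 = \|H_0\|_2 = 1$, which forces $\lambda_0 = \int_0^\infty \mathcal{F}(r/\pi) e^{-r}\,\mathrm{d}r$ when $\mathcal{F}\equiv 1$ gives $\lambda_0 = 1$, thereby pinning down $|c_0|^2$, and then induct or use the generating function to propagate the constant to general $k$.
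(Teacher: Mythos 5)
This theorem is not proved in the paper at all: it is quoted from Daubechies' original article, so there is no in-paper argument to compare against. Your proposal is essentially Daubechies' own route, and it is sound in structure: compute $V_\phi H_k$ explicitly (a Bargmann-transform type formula), note that the radial weight together with the angular integral $\int_0^{2\pi}e^{i(k-j)\theta}\,\mathrm{d}\theta$ kills all off-diagonal matrix elements, invoke completeness of the Hermite basis to conclude $P_{F,\phi}H_k=\lambda_k H_k$, and read off $\lambda_k$ from the radial integral.

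The one concrete slip is in your recollection of the Gaussian factor: with the paper's normalizations one has $V_\phi H_k(\omega,t)=e^{-\pi i\omega t}\,\frac{\pi^{k/2}}{\sqrt{k!}}\,\bar z^{\,k}\,e^{-\pi|z|^2/2}$ (up to a harmless phase/conjugation convention), not $e^{-\pi|z|^2}$. Hence $|V_\phi H_k(\omega,t)|^2=\frac{\pi^k}{k!}\,r^{2k}e^{-\pi r^2}$, and the substitution $u=\pi r^2$ in $\lambda_k=2\pi\int_0^\infty \mathcal{F}(r^2)\frac{\pi^k}{k!}r^{2k}e^{-\pi r^2}r\,\mathrm{d}r$ gives $\lambda_k=\int_0^\infty\mathcal{F}(u/\pi)\frac{u^k}{k!}e^{-u}\,\mathrm{d}u$ on the nose; the $e^{-2u}$ discrepancy you worried about, and the extra rescaling $r\mapsto 2u$, are artifacts of the wrong exponent and disappear. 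Your own proposed sanity check at $k=0$ (a direct computation gives $V_\phi\phi(\omega,t)=e^{-\pi i\omega t}e^{-\pi(\omega^2+t^2)/2}$, so $\mathcal{F}\equiv 1$ yields $\lambda_0=1$) would catch and correct exactly this, so the plan as a whole goes through with only that bookkeeping repair.
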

Observe that the normalized Gaussian in \eqref{normalized_Gaussian} coincides with $H_0$ in \eqref{Hermite_functions_one_dimensional}. It was shown recently in \cite{Weyl_Heisenberg_Ensembles_Grochenig_2017} that (for each $j$) the Hermite functions are also eigenfunctions of any localization operator with window $H_j$ and a spherically symmetric weight. Nevertheless, we will always presume the window $\phi$ to be the normalized Gaussian.  

We will consider the case when $\mathcal{F}$ equals the characteristic function of some subset $E\subseteq \mathbb{R}_{+}$, i.e., $\mathcal{F}(r) = \chi_{E}(r)$. Note that by definition \eqref{def_spherically_symmetric_weight}, the set $E$ is identified with the subset
$\pazocal{E} = \{ (\omega,t) \in \mathbb{R}^2 \ | \ \omega^2+t^2 \in E\}$ of the plane. As a matter of convenience, we will denote the associated Daubechies operator simply by $P_{E}$.
By Theorem \ref{theorem_derivation_spectrum_gaussian_window}, the eigenvalue corresponding to the $k$-th Hermite function is then given by 
\begin{align}
    \lambda_{k} = \int_{\pi\cdot E}\frac{r^k}{k!}e^{-r}\mathrm{d}r, \ \text{for } k = 0,1,2,\dots,
    \label{eigenvalues_characteristic_function}
\end{align}
where $\pi \cdot E := \{ x \in \mathbb{R}_{+}\ | \ x/\pi \in E\}$. 
Since the above integrands will appear frequently, we define, for simplicity, the functions
\begin{align*}
    f_k(r) := \frac{r^k}{k!}e^{-r}, \ r\geq 0, \ \text{for } k=0,1,2,\dots
\end{align*}

In section \ref{section_localization_on_cantor_set} we require two results regarding the integrands $\{f_k\}_k$ (for additional details, see Chapter 4.2 in \cite{HelgeKnutsen_master_thesis}), namely
\begin{align}
    f_k(k-r) \leq f_k(k+r) \ \ \forall \ r\in[0,k] \ \text{for } k=1,2,3,\dots
    \label{right_slope_less_than_left_slope_f_k}
\end{align}
and 
\begin{align}
    \int_{0}^{|E|}f_0(r)\mathrm{d}r = 1-e^{-|E|} \geq \int_{E}f_k(r)\mathrm{d}r \ \ \text{for } k = 0,1,2,\dots, 
    \label{maximal_localization_disk}
\end{align}
where $E$ is some measurable subset of $\mathbb{R}_{+}$.

\subsection{Cantor Set}
\label{section_cantor_set}

The \textit{mid-third Cantor set} based in the interval $[0,R]$ is constructed as follows: Start with the interval $C_0(R)=[0,R]$. Each $n$-iterate $C_n(R)$ is the union of $2^n$ disjoint, closed intervals $\{I_{j,n}\}_{j}$. To obtain the next iterate $C_{n+1}(R)$  remove the open middle-third interval in every interval  $I_{j,n}$. 
Such iterations yield a nested sequence $C_0\supseteq C_1 \supseteq C_2 \supseteq \dots$
The \textit{mid-third Cantor} set $C(R)$ on the interval $[0,R]$ is then defined as the intersection of all the $n$-iterates, i.e.,
\begin{align*}
    C(R) = \bigcap_{n=0}^{\infty}C_n(R). 
\end{align*}

For each $n$-iterate, we define a corresponding map $\pazocal{G}_{R,n}: \mathbb{R}\to [0,1]$ by 
\begin{align}
    \pazocal{G}_{R,n}(x) = \frac{1}{|C_n(R)|}\cdot \begin{cases} 
    0, \ x\leq 0,\\
    |C_n(R)\cap[0,x]|, \ x > 0 \ \ \text{for } n=0,1,2,\dots,
    \end{cases}
    \label{cantor_function}
\end{align}
which we refer to as the $n$-iterate Cantor function. These functions will come into play in the latter part of section \ref{section_localization_on_cantor_set}, where we will utilize the fact that $\{\pazocal{G}_{R,n}\}_n$ are all \textit{subadditive}, i.e.,
\begin{align}
    \pazocal{G}_{R,n}(a+b) \leq \pazocal{G}_{R,n}(a)+\pazocal{G}_{R,n}(b) \ \ \forall \ a,b\in \mathbb{R},
    \label{subadditivity_cantor_function}
\end{align}
which was shown by induction by Josef Dobo\v{s} in \cite{JosefDobos_1996}.
\\\\
In the spherically symmetric context, we consider the following Cantor set construction: For the disk of radius $R>0$ centered at the orgin, we identify the $n$-iterate with the subset 
\begin{align}
    \mathcal{C}_n(R) = \{ (\omega,t)\in \mathbb{R}^2 \ | \ \omega^2+t^2 \in C_n(R^2) \} \subseteq \mathbb{R}^2.
    \label{n_iterate_measure_regular_cantor_set_disk}
\end{align}
This means we consider weights of the form
\begin{align*}
    \mathcal{F}(r) = \chi_{C_n(R^2)}(r), \ \text{for } R>0 \ \text{and } n=0,1,2,\dots
\end{align*}
Based on formula \eqref{eigenvalues_characteristic_function}, the eigenvalues of $P_{C_n(R^2)}$ can then be expressed as
\begin{align}
    \lambda_k(\mathcal{C}_n(R)) &= \int_{\pi \cdot C_n(R^2)}f_k(r)\mathrm{d}r = \int_{C_n(\pi R^2)}f_k(r)\mathrm{d}r.
    \label{eigenvalue_n_iterate_cantor_set}
\end{align}


\section{Main Results}
\label{section_main_results}
In this section we present estimates for the operator norm of Daubechies' operator localizing on different spherically symmetric sets. For this purpose, it would be sufficient to determine the largest eigenvalue of the operator and estimate said eigenvalue. Nonetheless, even with identity \eqref{eigenvalues_characteristic_function}, it may prove difficult to determine which eigenvalue is the largest. Under such circumstances, we will instead attempt to derive a common upper bound for the eigenvalues.

\subsection{Localization on a Ring: Asymptotic Estimate}
\label{section_results_ring}
The first example we consider shows that any eigenvalue $\lambda_k$ of Daubechies' localization operator can, in principle, be the largest eigenvalue. Consider localization on a ring of inner radius $R>0$ in phase space of measure $1$, that is, the subset $E(R) = [R^2, R^2+\pi^{-1}]$ with the associated localization operator $P_{E(R)}$. By \eqref{eigenvalues_characteristic_function}, the eigenvalues of $P_{E(R)}$ become 
\begin{align*}
    \lambda_k(R) = \int_{\pi E(R)}f_k(r)\mathrm{d}r = \int_{\pi R^2}^{\pi R^2 +1} f_k(r)\mathrm{d}r \ \ \text{for } k=0,1,2,\dots
\end{align*}
Now, assume that $\pi R^2 \in [m,m+1]$ for some $m\in \mathbb{N}\cup \{0\}$. Since the difference $f_k(r) - f_{k+1}(r)$ is negative precisely when $r>k+1$, we obtain the ordering
\begin{align*}
    &\lambda_0(R) \leq \lambda_1(R) \leq \lambda_2(R) \leq \dots \leq \lambda_m(R)
    \intertext{and}
    & \lambda_{m+1}(R) \geq \lambda_{m+2}(R) \geq \lambda_{m+3}(R) \geq \dots
\end{align*}
Under these conditions, either $\lambda_{m}(R)$ or $\lambda_{m+1}(R)$ must be the largest eigenvalue. In particular, if $\pi R^2 = m$, then $\lambda_m(R)$ becomes the largest eigenvalue.
In the next proposition we provide an asymptotic estimate of the operator norm of $P_{E(R)}$ as $R\to \infty$. 
\begin{proposition} 
The operator norm of $P_{[R^2, R^2+\pi^{-1}]}$ satisfies
\begin{align*}
    \| P_{[R^2, R^2 + \pi^{-1}]} \|_{\text{op}} = \frac{1}{\pi \sqrt{2}}R^{-1} + \pazocal{O}(R^{-3}) \ \ \text{as } R\to \infty.
\end{align*}
\begin{proof}
Assume $\pi R^2 \geq 1$ and let $n:= \floor{\pi R^2}$, where $\floor{\cdot}$ denotes the floor function, rounding down to the nearest integer. Apply a zero-order approximation (i.e., max-min) of the integrands $f_k(r)$ for $r\in[n,n+2] \supseteq [\pi R^2, \pi R^2+1]$. In particular, $f_n(n)$ serves as an upper bound and, by inequality \eqref{right_slope_less_than_left_slope_f_k}, $f_{n+1}(n)$ serves as a lower bound for the operator norm. That is, 
\begin{align*}
    \frac{n^{n+1}}{(n+1)!} e^{-n} \leq \| P_{[R^2,R^2+\pi^{-1}]}\|_{\text{op}} \leq \frac{n^n}{n!} e^{-n}.
\end{align*}
Combine this with \textit{Stirling's asymptotic formula} for the factorial
\begin{equation*}
    \sqrt{2\pi}\cdot n^{n+1/2}e^{-n} \leq n! \leq e^{\frac{1}{12n}} \sqrt{2\pi}\cdot n^{n+1/2}e^{-n} \ \text{for } n=1,2,3,\dots,
    \label{Stirlings_formula}
\end{equation*}
to obtain
\begin{align*}
    \frac{1}{\sqrt{2\pi}} n^{-1/2}\left(1+\frac{1}{n}\right)^{-1}e^{-\frac{1}{12n}} &\leq \| P_{[R^2, R^2+\pi^{-1}]}\|_{\text{op}} \leq \frac{1}{\sqrt{2\pi}}n^{-1/2}.
\end{align*}
Expressing the above identity in terms of $R$ as $R\to \infty$ yields the desired result.\\ 
\end{proof}
\end{proposition}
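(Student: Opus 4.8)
The plan is to reduce the operator-norm computation to estimating the single eigenvalue $\lambda_n(R)$ where $n = \floor{\pi R^2}$, and then to pin down that eigenvalue using the max-min (zero-order Taylor) bounds on the integrands $f_k$ together with Stirling's formula. Concretely, since the weight is the characteristic function of the ring $[\pi R^2, \pi R^2 + 1]$, formula \eqref{eigenvalues_characteristic_function} writes every eigenvalue as $\lambda_k(R) = \int_{\pi R^2}^{\pi R^2 + 1} f_k(r)\,\mathrm{d}r$, and the operator norm equals $\max_k \lambda_k(R)$. The preamble to the proposition already establishes that the maximizing index is either $n$ or $n+1$; in fact for the asymptotic statement it suffices to sandwich the norm between something involving $f_n(n)$ and something involving $f_{n+1}(n)$, so I do not even need to identify the maximizer precisely.

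First I would fix $R$ large enough that $\pi R^2 \geq 1$ and set $n := \floor{\pi R^2}$, so that the integration interval $[\pi R^2, \pi R^2 + 1]$ is contained in $[n, n+2]$. On this interval $f_n$ is unimodal with peak at $r = n$, so $f_n(n) = n^n e^{-n}/n!$ is an upper bound for $\lambda_n(R)$, hence (being at least as large as the true norm-eigenvalue, after checking $\lambda_{n+1}(R) \le f_{n+1}(\cdot)$ similarly) for the norm. For the lower bound I would use that the norm is at least $\lambda_n(R) \geq \min_{r \in [n, n+2]} f_n(r) \cdot 1$; by the slope inequality \eqref{right_slope_less_than_left_slope_f_k} the minimum of $f_n$ over $[n-2, n+2]$ is attained at the left endpoint, and since we only have the right half $[n, n+2]$ I would instead bound $\lambda_n(R) \ge \int_{\pi R^2}^{\pi R^2+1} f_n \ge f_n(n+2)$, or more cleanly invoke exactly the argument sketched in the ring discussion comparing $f_n$ and $f_{n+1}$ so that $f_{n+1}(n) = n^{n+1} e^{-n}/(n+1)!$ is a valid lower bound. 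This yields
\begin{align*}
    \frac{n^{n+1}}{(n+1)!}\,e^{-n} \;\leq\; \bigl\| P_{[R^2, R^2 + \pi^{-1}]} \bigr\|_{\mathrm{op}} \;\leq\; \frac{n^n}{n!}\,e^{-n}.
\end{align*}

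Next I would substitute Stirling's bounds $\sqrt{2\pi}\, n^{n+1/2} e^{-n} \leq n! \leq e^{1/(12n)} \sqrt{2\pi}\, n^{n+1/2} e^{-n}$ into both sides. The upper bound becomes $n^n e^{-n} / (\sqrt{2\pi}\, n^{n+1/2} e^{-n}) = (2\pi n)^{-1/2}$, and the lower bound becomes $(2\pi)^{-1/2} n^{-1/2} (1 + 1/n)^{-1} e^{-1/(12n)}$, giving
\begin{align*}
    \frac{1}{\sqrt{2\pi}}\, n^{-1/2}\Bigl(1 + \tfrac{1}{n}\Bigr)^{-1} e^{-1/(12n)} \;\leq\; \bigl\| P_{[R^2, R^2 + \pi^{-1}]} \bigr\|_{\mathrm{op}} \;\leq\; \frac{1}{\sqrt{2\pi}}\, n^{-1/2}.
\end{align*}
Finally, since $n = \floor{\pi R^2} = \pi R^2 + \pazocal{O}(1)$, I would expand $n^{-1/2} = (\pi R^2)^{-1/2}(1 + \pazocal{O}(R^{-2}))^{-1/2} = (\pi)^{-1/2} R^{-1} + \pazocal{O}(R^{-3})$, and note the correction factors $(1+1/n)^{-1} e^{-1/(12n)} = 1 + \pazocal{O}(R^{-2})$ are absorbed into the same error term; combining with $1/\sqrt{2\pi} \cdot 1/\sqrt{\pi} = 1/(\pi\sqrt{2})$ gives the claimed expansion $\| P_{[R^2, R^2+\pi^{-1}]} \|_{\mathrm{op}} = \frac{1}{\pi\sqrt{2}} R^{-1} + \pazocal{O}(R^{-3})$.

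The only genuinely delicate point is the lower bound: a naive max-min over $[n, n+2]$ loses too much because the interval is not symmetric about the peak $n$. The fix — and the step I would be most careful about — is to use the comparison between consecutive integrands (the difference $f_k - f_{k+1}$ is negative exactly for $r > k+1$, as already noted in the ring discussion) so that $\int_{\pi R^2}^{\pi R^2+1} f_n \ge \int_{\pi R^2}^{\pi R^2+1} f_{n+1} \ge f_{n+1}(n)$ holds via inequality \eqref{right_slope_less_than_left_slope_f_k} applied at index $n+1$, which is exactly what makes the two Stirling-processed bounds agree to leading order. Everything else is routine asymptotic bookkeeping.
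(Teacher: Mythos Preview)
Your approach is essentially identical to the paper's: set $n=\floor{\pi R^2}$, sandwich the operator norm between $f_{n+1}(n)$ and $f_n(n)$ via max--min on $[n,n+2]$, then invoke Stirling and expand in $R$. One small slip in your final paragraph: the chain $\int_{\pi R^2}^{\pi R^2+1} f_n \ge \int_{\pi R^2}^{\pi R^2+1} f_{n+1}$ need not hold, since $f_n < f_{n+1}$ on $(n+1,n+2]$ and the integration interval can extend there; but you do not need it, because the lower bound on the norm comes directly from $\|P\|_{\mathrm{op}} \ge \lambda_{n+1}(R) \ge \min_{[n,n+2]} f_{n+1} = f_{n+1}(n)$, the last equality being exactly inequality~\eqref{right_slope_less_than_left_slope_f_k} at index $n+1$, as both you and the paper note.
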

\subsection{Localization on Set of Infinite Measure}
\label{section_results_set_infinite_measure}
Next, we consider a non-trivial example of localization on a spherically symmetric set of \textit{infinite} measure (see \cite{ReznikovAlexander2010Scit} for a similar example in the separate time-frequency representation). Take the set of equidistant intervals 
\begin{align}
    E(s) := \bigcup_{n=0}^{\infty}\frac{1}{\pi}\cdot[n,n+s] \ \ \text{for } s \in [0,1]. 
    \label{equidistant_invervals_spherical_symmetric_set}
\end{align}
Although the above set has infinite measure, we maintain good control over the operator norm of $P_{E(s)}$ and can produce precise estimates in terms of the parameter $s$. 
\begin{theorem}
Let $E(s)\subseteq \mathbb{R}_{+}$ be as in \eqref{equidistant_invervals_spherical_symmetric_set} with $s\in[0,1]$. Then the operator norm of $P_{E(s)}$ satisfies the bounds
\begin{align}
    (1-e^{-s})C \leq \| P_{E(s)}\|_{\text{op}} \leq \min\{Cs,1\} \ \ \forall \ s\in[0,1] \ \ \text{with } C = \frac{e}{e-1}.
    \label{equidistant_intervals_bounds_operator_norm}
\end{align}
Further, there exists $s_0>0$ such that 
\begin{align}
    \|P_{E(s)}\|_{\text{op}} = (1-e^{-s})C \ \ \forall \ 0<s<s_0. 
    \label{equidistant_intervals_lambda_0=operator_norm}
\end{align}
\end{theorem}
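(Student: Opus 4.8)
The plan is to compute the eigenvalues of $P_{E(s)}$ explicitly as functions of $s$ using formula \eqref{eigenvalues_characteristic_function}, and then identify which eigenvalue dominates. Since $\pi \cdot E(s) = \bigcup_{n\geq 0}[n, n+s]$, the $k$-th eigenvalue is
\begin{align*}
    \lambda_k(s) = \sum_{n=0}^{\infty}\int_{n}^{n+s} f_k(r)\,\mathrm{d}r.
\end{align*}
First I would establish the upper bound $\|P_{E(s)}\|_{\text{op}} \leq 1$, which is immediate since $\mathcal{F} = \chi_{E(s)}$ gives $\|\mathcal{F}\|_\infty = 1$ and hence $|\lambda_k| \leq \|F\|_\infty = 1$ for all $k$ (as noted in section \ref{section_Daubechies_loc_op}). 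Next, for the bound $\lambda_k(s) \leq Cs$, I would exploit the geometry of the equidistant intervals: on each unit block $[n, n+1]$ the function $f_k$ is either monotone or unimodal, so $\int_n^{n+s} f_k \leq s \cdot \max_{[n,n+1]} f_k$, and summing the block maxima against $\sum_n \int_{n-1}^{n+1} f_k$-type bounds (or more directly comparing with a geometric-type series with ratio controlled by $e^{-1}$) should yield the constant $C = e/(e-1)$. The cleanest route is probably to first handle $k=0$ by direct summation of geometric series — $\lambda_0(s) = \sum_n (e^{-n} - e^{-(n+s)}) = (1-e^{-s})\frac{1}{1-e^{-1}} = (1-e^{-s})C$ — which simultaneously gives the lower bound in \eqref{equidistant_intervals_bounds_operator_norm} and the exact value appearing in \eqref{equidistant_intervals_lambda_0=operator_norm}.

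For the upper bound $\lambda_k(s) \leq Cs$ for general $k \geq 1$, I would argue that the "worst" distribution of mass into the intervals $[n,n+s]$ is achieved at $k=0$, where $f_0$ is decreasing so the intervals closest to the origin carry the most weight. Concretely, I expect to use inequality \eqref{maximal_localization_disk} blockwise, or a convexity/rearrangement argument showing $\lambda_k(s)/s$ is maximized as $s \to 0^+$ and that this limiting value $\sum_n f_k(n)$ is itself maximized at $k=0$ (where it equals $\sum_n e^{-n} = C$). Combined with $\lambda_k(s) \leq 1$, this gives the stated minimum.

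The delicate part — and the main obstacle — is the second assertion \eqref{equidistant_intervals_lambda_0=operator_norm}: showing that for sufficiently small $s$ the \emph{zeroth} eigenvalue is actually the largest, i.e. $\lambda_0(s) \geq \lambda_k(s)$ for all $k \geq 1$ once $s < s_0$. For $k \geq 1$ the mass $f_k(r)$ is concentrated near $r = k$ rather than at the origin, and one must show this concentration is penalized by the gaps between the intervals. I would try to prove $\lambda_k(s) \leq \lambda_0(s)$ by comparing derivatives in $s$: both vanish at $s=0$, and $\frac{d}{ds}\lambda_k(s)|_{s=0^+} = \sum_n f_k(n)$, so it suffices to show $\sum_n f_k(n) < \sum_n f_0(n) = C$ for every $k \geq 1$ together with a uniform second-order control so that the inequality persists on a full interval $(0, s_0)$. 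The bound $\sum_{n\geq 0} f_k(n) < \sum_{n \geq 0} f_0(n)$ is plausible since $f_k$ spreads its unit-ish mass over a wider range and samples it at integer points, but making this rigorous — and extracting an explicit or at least existential $s_0$ uniform in $k$ — will require care, likely using \eqref{right_slope_less_than_left_slope_f_k} to control $f_k$ near its peak and a tail estimate for large $k$. Once $\lambda_0(s)$ is confirmed to dominate for $s < s_0$, the exact formula $\lambda_0(s) = (1-e^{-s})C$ from the geometric sum finishes the proof.
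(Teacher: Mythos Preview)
Your strategy matches the paper's almost exactly: compute $\lambda_k(s)=\sum_n\int_n^{n+s}f_k$, bound each block by $s\cdot\max_{[n,n+1]}f_k$, compute $\lambda_0(s)=(1-e^{-s})C$ by geometric summation, and reduce everything to the single inequality $\sup_{k\geq 1}\big(\text{sum of block maxima for }f_k\big)<C$. One small correction: when you sum the block maxima for $k\geq 1$, the peak value $f_k(k)$ is the maximum on \emph{two} adjacent blocks, $[k-1,k]$ and $[k,k+1]$, so the bound is $\lambda_k(s)\leq s\big(f_k(k)+\sum_{n\geq 0}f_k(n)\big)$ rather than $s\sum_n f_k(n)$. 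This extra term matters because you need a \emph{uniform} constant $C_0<C$ valid for all $s$ (not just the derivative at $s=0$), which is exactly what gives an $s_0$ independent of $k$.

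The one genuine gap is the proof of that key inequality. You flag it as the ``delicate part'' and propose using \eqref{right_slope_less_than_left_slope_f_k} plus tail estimates, but the paper's argument is both simpler and sharper: since $f_k$ is unimodal with peak at $k$, the Riemann sum over integers $n\neq k$ is dominated by the integral,
\[
\sum_{n\neq k} f_k(n)\ \leq\ \int_0^\infty f_k(r)\,\mathrm{d}r\ =\ 1,
\]
so $f_k(k)+\sum_{n\geq 0}f_k(n)\leq 1+2f_k(k)\leq 1+2f_2(2)=1+4e^{-2}<C$ for all $k\geq 2$, and $k=1$ is checked directly. This single line replaces the ``uniform second-order control'' you were worried about and immediately yields both the upper bound $\lambda_k(s)\leq Cs$ and the existence of $s_0$.
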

\begin{proof}
By formula \eqref{eigenvalues_characteristic_function}, the eigenvalues read
\begin{align*}
    \lambda_k(s) = \int_{\pi\cdot E(s)}f_k(r)\mathrm{d}r = \sum_{n=0}^{\infty}\int_{n}^{n+s}f_k(r)\mathrm{d}r \ \ \text{for } k=0,1,2,\dots
\end{align*}
For each integral over $[n,n+s]$ apply a zero-order approximation for the integrands $f_k$, i.e., consider the maximum of $f_k(r)$ for $r\in[n,n+1]$ such that
\begin{align}
    \lambda_0(s) &\leq s\sum_{n=0}^{\infty}f_0(n) = s \sum_{n=0}^{\infty}e^{-n} = \frac{s}{1-e^{-1}}= Cs
    \label{equidistant_intervals_lambda_0_upper_bound}
    \intertext{and}
    \lambda_k(s) &\leq s\left( f_k(k) + \sum_{n=0}^{\infty}f_k(n)\right) \ \ \text{for } k=1,2,3,\dots
    \label{equidistant_intervls_lambda_k_upper_boud}
\end{align}
We now claim that the following inequality holds
\begin{align}
    f_k(k) + \sum_{n=0}^{\infty}f_k(n) < \sum_{n=0}^{\infty}f_0(n) \ \ \text{for } k=1,2,3,\dots
    \label{equidistant_intervals_sum_f_k_inequality}
\end{align}
For $k=1$, inequality \eqref{equidistant_intervals_sum_f_k_inequality} is verified by computing the series explicitly.
While for $k>1$, compare the series with the integral over $\mathbb{R}_{+}$, that is
\begin{align*}
    \sum_{n\neq k}f_k(n) \leq \int_{0}^{\infty}f_k(r)\mathrm{d}r = 1. 
\end{align*}
Thus,
\begin{align*}
    f_k(k) + \sum_{n=0}^{\infty}f_k(n) \leq 1+2f_k(k) \leq 1+2f_2(2) = 1+4e^{-2} \ \ \text{for } k=2,3,4,\dots
\end{align*}
Since $1+4e^{-2} < C$, claim \eqref{equidistant_intervals_sum_f_k_inequality} follows. Combining results \eqref{equidistant_intervals_lambda_0_upper_bound}-\eqref{equidistant_intervals_sum_f_k_inequality} yields the upper bound in \eqref{equidistant_intervals_bounds_operator_norm}. In the lower bound case of \eqref{equidistant_intervals_bounds_operator_norm}, it is sufficient to observe
\begin{align*}
    \lambda_0(s) = \sum_{n=0}^{\infty}\int_{n}^{n+s}e^{-r}\mathrm{d}r = (1-e^{-s})\sum_{n=0}^{\infty}e^{-n} = (1-e^{-s})C. 
\end{align*} 
For the equality case \eqref{equidistant_intervals_lambda_0=operator_norm}, note that inequlity \eqref{equidistant_intervals_sum_f_k_inequality} ensures that there exists a constant $0<C_0<C$ such that
$\lambda_k(s) \leq C_0s$ for any $k,s>0$. Since $1-e^{-s}\nearrow s$ as $s \to 0$, it follows that some $s_0>0$ with property \eqref{equidistant_intervals_lambda_0=operator_norm} exists.

\end{proof}

\subsection{Localization on Spherically Symmetric Cantor Set}
\label{section_localization_on_cantor_set}
In this section we consider localization on the $n$-iterate spherically symmetric Cantor set, i.e., the set $\mathcal{C}_n(R)$ in \eqref{n_iterate_measure_regular_cantor_set_disk}. Hence, we consider the localization operator $P_{C_n(R^2)}$, and below two theorems for the operator norm are presented.

The first theorem shows to what extent the operator norm is bounded by the first eigenvalue $\lambda_0(\mathcal{C}_n(R))$. 

\begin{theorem}
The operator norm of $P_{C_n(R^2)}$ is bounded from above by 
\begin{align*}
    \| P_{C_n(R^2)}\|_{\text{op}} \leq 2\lambda_0(\mathcal{C}_n(R)) \ \ \text{for } n = 0,1,2,\dots
\end{align*}
\label{bound_operator_norm_thm1}
\end{theorem}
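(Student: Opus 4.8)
The plan is to reduce the norm estimate to an inequality between the explicit eigenvalues of \eqref{eigenvalue_n_iterate_cantor_set} and then to control those eigenvalues through the geometry of the Cantor set. Since the weight $\chi_{C_n(R^2)}$ is non-negative, $P_{C_n(R^2)}$ is a self-adjoint compact operator with non-negative eigenvalues, so $\|P_{C_n(R^2)}\|_{\mathrm{op}}=\sup_{k\ge0}\lambda_k(\mathcal C_n(R))$. Writing $M:=\pi R^2$ and invoking \eqref{eigenvalue_n_iterate_cantor_set}, the statement is therefore equivalent to
\[
\int_{C_n(M)}f_k(r)\,\mathrm{d}r\;\le\;2\int_{C_n(M)}f_0(r)\,\mathrm{d}r\qquad\text{for every }k\ge1 .
\]

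The central tool is subadditivity of the Cantor function \eqref{subadditivity_cantor_function}, which translates into $|C_n(M)\cap I|\le|C_n(M)\cap[0,|I|]|$ for every interval $I$; i.e.\ the leftmost window of a given length carries the most mass of $C_n(M)$. Each super-level set $\{f_k>t\}$ is an interval, since $f_k$ is unimodal, so combining the layer-cake formula with this observation gives
\[
\lambda_k=\int_0^{\|f_k\|_\infty}\!\!\bigl|C_n(M)\cap\{f_k>t\}\bigr|\,\mathrm{d}t\;\le\;\int_0^{\|f_k\|_\infty}\!\!\bigl|C_n(M)\cap[0,w_k(t)]\bigr|\,\mathrm{d}t,\qquad w_k(t):=\bigl|\{f_k>t\}\bigr|,
\]
whereas for $f_0$ the super-level sets already begin at $0$, so $\lambda_0=\int_0^1|C_n(M)\cap[0,-\ln t]|\,\mathrm{d}t$ exactly. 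Since $|C_n(M)\cap[0,w]|$ equals $|C_n(M)|$ times $G(w)$, where $G:=\pazocal{G}_{M,n}$ is the Cantor function of \eqref{cantor_function}, this reduces the claim to the one-dimensional comparison $\int_0^{\|f_k\|_\infty}G(w_k(t))\,\mathrm{d}t\le2\int_0^1G(w_0(t))\,\mathrm{d}t$ with $w_0(t)=-\ln t$, to be proved from the monotonicity and subadditivity of $G$ (and $0\le G\le1$), the normalizations $\int_0^\infty f_k=1$ — equivalently $\int w_k\,\mathrm{d}t=1$ for every $k$ — and the uniform peak bound $\|f_k\|_\infty=f_k(k)\le f_1(1)=e^{-1}$ valid for $k\ge1$. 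Inequality \eqref{right_slope_less_than_left_slope_f_k}, which says that $f_k$ carries more of its mass to the right of its mode, should also enter, counterbalancing the fact that $C_n(M)$ carries more of its mass near the origin.

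I would dispose of the degenerate regimes separately: whenever $(k!)^{1/k}\ge M$ — which holds for every $k\ge1$ when $M\le1$, and for all sufficiently large $k$ in general — one has $r^k/k!\le1$ on $[0,M]$, hence $f_k\le f_0$ there and $\lambda_k\le\lambda_0$ immediately. The main obstacle is the intermediate range of $k$, roughly $1\le k\lesssim\pi R^2$, in which the bump of $f_k$ overlies the comparatively dense part of $C_n(M)$ and the crude estimate $G\le1$ on the relevant window loses precisely a bounded factor. Overcoming this requires exploiting that the bump of $f_k$ has width only $\sim\sqrt k$ — exactly the information recorded by $w_k(t)$ — so that $\int_{C_n(M)}f_k$ sees the mass of $C_n(M)$ over a short interval rather than over the whole of $[0,M]$; making this estimate quantitative with the constant equal to $2$ is where the real work lies.
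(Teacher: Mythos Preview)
Your proposal is not a proof: you reduce to the comparison $\int_0^{\|f_k\|_\infty}G(w_k(t))\,\mathrm{d}t\le 2\int_0^{1}G(w_0(t))\,\mathrm{d}t$ and then stop, acknowledging that ``making this estimate quantitative with the constant equal to $2$ is where the real work lies.'' The tools you list---monotonicity and subadditivity of $G$, the normalisations $\int w_k\,\mathrm{d}t=1$, the peak bound $\|f_k\|_\infty\le e^{-1}$, and \eqref{right_slope_less_than_left_slope_f_k}---do not obviously close this gap. Subadditivity of $G$ lets you translate intervals to the origin, which is how you obtained the upper bound for $\lambda_k$, but it says nothing about how $\int G\circ w_k$ compares to $\int G\circ w_0$ when the profiles $w_k$ and $w_0$ have completely different shapes; $G$ is neither concave nor does $G(x)/x$ have a useful monotonicity, so a rearrangement-type argument does not go through without a further idea.

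The paper takes a structurally different route that exploits the \emph{self-similarity} of $C_n$ rather than treating it as a generic set through its distribution function $G$. It introduces the ``relative area'' $\pazocal{A}_k(s,3L)$, the fraction of $\int_s^{s+3L}f_k$ that survives removal of the middle third, and proves by a direct derivative computation that $\pazocal{A}_k(s,3L)\le\pazocal{A}_0(3L)$ whenever $s\ge k$. Iterating this along the Cantor construction gives $\int_{C_n(\pi R^2)+k}f_k\le\lambda_0(\mathcal{C}_n(R))$ for the Cantor set \emph{shifted by $k$}. Subadditivity of the Cantor function is then used for a different purpose than in your sketch: together with \eqref{right_slope_less_than_left_slope_f_k} it shows that each of $\int_{C_n\cap[0,k]}f_k$ and $\int_{C_n\cap[k,\infty)}f_k$ is bounded by $\int_{C_n+k}f_k$. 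The constant $2$ thus arises from the split at $r=k$, not from any analytic comparison of decreasing rearrangements. Your layer-cake reduction discards precisely the iterative structure that the paper's argument rides on.
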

The second theorem is a \textit{precise} asymptotic estimate of the operator norm of $P_{C_n(R^2)}$ (up to a multiplicative constant) based on the same asymptotic estimate for $\lambda_0(\mathcal{C}_n(R))$. 

\begin{theorem}
There exist positive, finite constants $c_1 \leq c_2$ such that for each $n=0,1,2,\dots$
\begin{align*}
    c_1 \leq \frac{\Big(2\pi R^2+1\Big)^{\frac{\ln{2}}{\ln{3}}}}{2^n\big({1-e^{-\pi R^2/3^n}}\big)}\cdot \| P_{C_n(R^2)}\|_{\text{op}}
    \leq c_2 \ \ \forall \ \pi R^2\in[0,3^n/2].
\end{align*}
\label{asymptotic_bound_operator_norm_thm2}
\end{theorem}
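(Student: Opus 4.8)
The plan is to reduce the statement to an estimate of the smallest-index eigenvalue $\lambda_0(\mathcal{C}_n(R))$, and then to a clean product formula. By Theorem~\ref{bound_operator_norm_thm1}, $\|P_{C_n(R^2)}\|_{\text{op}}\leq 2\lambda_0(\mathcal{C}_n(R))$; and since by \eqref{eigenvalue_n_iterate_cantor_set} the operator $P_{C_n(R^2)}$ is self-adjoint with nonnegative eigenvalues, its norm equals its largest eigenvalue, so also $\|P_{C_n(R^2)}\|_{\text{op}}\geq \lambda_0(\mathcal{C}_n(R))$. Hence it suffices to prove the claimed two-sided bound with $\|P_{C_n(R^2)}\|_{\text{op}}$ replaced by $\lambda_0(\mathcal{C}_n(R))$, the constant $2$ being absorbed into $c_1,c_2$.

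Next I would evaluate $\lambda_0(\mathcal{C}_n(R))=\int_{C_n(\pi R^2)}e^{-r}\,\mathrm{d}r$ in closed form. Writing $L:=\pi R^2$ and using the self-similarity $C_n(L)=C_{n-1}(L/3)\cup\big(\tfrac{2L}{3}+C_{n-1}(L/3)\big)$, the integral satisfies $\int_{C_n(L)}e^{-r}\,\mathrm{d}r=(1+e^{-2L/3})\int_{C_{n-1}(L/3)}e^{-r}\,\mathrm{d}r$ with base case $\int_{C_0(L)}e^{-r}\,\mathrm{d}r=1-e^{-L}$; iterating gives
\begin{align*}
    \lambda_0(\mathcal{C}_n(R))=\big(1-e^{-L/3^n}\big)\prod_{j=1}^{n}\big(1+e^{-2L/3^j}\big),\qquad L=\pi R^2.
\end{align*}
(Alternatively: the $2^n$ subintervals of $C_n(L)$ each have length $L/3^n$ with left endpoints $\sum_{i=1}^{n}\epsilon_i\,2L/3^i$, $\epsilon\in\{0,1\}^n$, so the sum of $e^{-(\text{left endpoint})}$ factors as $\prod_{i=1}^{n}(1+e^{-2L/3^i})$.) Thus the central quantity of the theorem equals $(2L+1)^{\ln 2/\ln 3}\prod_{j=1}^{n}\tfrac{1+e^{-2L/3^j}}{2}$ up to the bounded factor $\|P_{C_n(R^2)}\|_{\text{op}}/\lambda_0(\mathcal{C}_n(R))\in[1,2]$, and the theorem becomes the uniform estimate
\begin{align*}
    \prod_{j=1}^{n}\frac{1+e^{-2L/3^j}}{2}\;\asymp\;(2L+1)^{-\ln 2/\ln 3}\qquad\text{for all } 0\leq L\leq 3^n/2
\end{align*}
(understood by continuity at $L=0$, where both sides equal $1$).

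To prove this I would take logarithms and split $\sum_{j=1}^{n}\ln\tfrac{1+e^{-2L/3^j}}{2}$ at $m:=\max\{0,\lfloor\log_3(2L)\rfloor\}$, the number of indices $j$ with $3^j\leq 2L$; the hypothesis $L\leq 3^n/2$ is exactly what guarantees $m\leq n$, so the split is legitimate. For $j\leq m$ write $\ln\tfrac{1+e^{-2L/3^j}}{2}=-\ln 2+\ln(1+e^{-2L/3^j})$; there $2L/3^j\geq 3^{m-j}$, so the correction sum is dominated by $\sum_{k\geq 0}e^{-3^k}<1$, and this block contributes $-m\ln 2+\pazocal{O}(1)$. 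For $m<j\leq n$ one has $2L/3^j<1$, and the elementary inequality $\tfrac{1+e^{-x}}{2}\geq e^{-x/2}$ (i.e.\ $(1-e^{-x/2})^2\geq 0$) gives $0\geq\sum_{j>m}\ln\tfrac{1+e^{-2L/3^j}}{2}\geq-\sum_{j>m}\tfrac{L}{3^j}\geq-\tfrac34$, again $\pazocal{O}(1)$. Finally $|m-\log_3(2L+1)|<2$, so $-m\ln 2=-\tfrac{\ln 2}{\ln 3}\ln(2L+1)+\pazocal{O}(1)$; all the $\pazocal{O}(1)$ terms here are absolute, which yields the claim.

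The main obstacle is keeping every error absolute, i.e.\ uniform in $n$ and in $L\in[0,3^n/2]$, around the transition region $2L/3^j\approx 1$. The crude bound $\tfrac{1+e^{-2L/3^j}}{2}\leq\tfrac{1+e^{-1}}{2}$ applied to all $m$ leading factors only gives a decay $(2L)^{-\kappa}$ with $\kappa=\ln\!\big(2/(1+e^{-1})\big)/\ln 3<\ln 2/\ln 3$, which is too weak; one genuinely needs that the factors with $j$ well below $m$ are exponentially close to $\tfrac12$ with \emph{summable} deviations, which is precisely what the super-geometric growth of $2L/3^j$ (a factor $3$ per step, all terms $\geq 1$) provides, and the matching lower bound needs nothing more. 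Correspondingly, the role of the hypothesis $L\leq 3^n/2$ is to keep $m\leq n$, i.e.\ to ensure $2^{-n}\prod_{j=1}^{n}(1+e^{-2L/3^j})$ behaves like the full product rather than saturating at $2^{-n}$.
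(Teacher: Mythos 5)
Your proposal is correct, and its skeleton is the same as the paper's: the upper bound comes from Theorem \ref{bound_operator_norm_thm1}, the lower bound from $\|P_{C_n(R^2)}\|_{\text{op}}\geq\lambda_0(\mathcal{C}_n(R))$ (the paper uses exactly this pair), and the closed form $\lambda_0(\mathcal{C}_n(R))=(1-e^{-\pi R^2/3^n})\prod_{j=1}^{n}(1+e^{-2\pi R^2/3^j})$ is the same identity the paper obtains from \eqref{relative_area_left_lambda_0_alternative_identity} and \eqref{eigenvalue_lambda_0_cantor_set_rel_prod}. Where you genuinely diverge is in how the uniform two-sided bound $\prod_{j=1}^{n}\tfrac{1}{2}(1+e^{-2L/3^j})\asymp(2L+1)^{-\ln 2/\ln 3}$ for $L\in[0,3^n/2]$ is established: the paper packages this as Proposition \ref{proposition_asymptotic_estimates_lambda_0}, takes logarithms, and reduces to two claims — (i) $\sup_{y\in[0,1]}\sum_{j\ge 1}[\ln(1+y^{1/3^j})-y^{1/3^j}\ln 2]<\infty$ and (ii) $\sum_{j=1}^{n}e^{-x/3^j}-(n-\log_3(x+1))=\pazocal{O}(1)$ on $[0,3^n]$ — whose verification is deferred to the appendix of the author's thesis. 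You instead split the log-sum at $m=\max\{0,\lfloor\log_3(2L)\rfloor\}$ (legitimate since $L\le 3^n/2$ forces $m\le n$), control the head by the summable tail $\sum_{k\ge0}e^{-3^k}<1$, the tail by $\tfrac{1+e^{-x}}{2}\ge e^{-x/2}$ giving an error of at most $3/4$, and finish with $|m-\log_3(2L+1)|<2$; all constants are absolute, so the argument is complete and self-contained. The trade-off is that your route yields explicit constants in one page with no external reference, while the paper's route isolates the first-eigenvalue asymptotics as a standalone proposition (reused to state Theorem \ref{bound_operator_norm_thm1} and Theorem \ref{asymptotic_bound_operator_norm_thm2} in parallel) at the cost of outsourcing the technical verification. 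Your closing remarks — that the crude bound $\tfrac{1+e^{-2L/3^j}}{2}\le\tfrac{1+e^{-1}}{2}$ would only give the weaker exponent $\ln\big(2/(1+e^{-1})\big)/\ln 3$, and that the hypothesis $\pi R^2\le 3^n/2$ is exactly what prevents the product from saturating at $2^{-n}$ — are accurate and correctly identify why the split at $m$ is needed; the only point to state explicitly (as you do by continuity) is the interpretation of the quotient at $\pi R^2=0$, an issue the theorem's formulation shares.
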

Observe that once we enforce condition \eqref{condition_one_parameter_problem}, namely $3^n \sim (\pi R^2)^2$, result \eqref{one_parameter_problem_result} presented in the introduction follows as a corollary. 
Both theorems are attained from the integral formula \eqref{eigenvalue_n_iterate_cantor_set} for the eigenvalues $\{\lambda_k(\mathcal{C}_n(R))\}_k$. However, as the number of intervals in $C_n(\cdot)$ grows as $2^n$, it soon becomes rather impractical to evaluate these integrals \textit{directly}. 
\\\\
Instead we consider the effect on the integrals \textit{locally} of increasing from one iterate to the next. In particular, this means we initially consider the integral of $f_k$ over a \textit{single} interval, say $[s,s+3L]$ for $s\geq 0$ and $L>0$. Then we attempt to determine the relative area left under the curve $f_k$ once the mid-third of the interval is removed, i.e., we wish to understand the function
\begin{align}
    \pazocal{A}_k(s,3L) := \left[\int_{s}^{s+L}f_k(r)\mathrm{d}r+\int_{s+2L}^{s+3L}f_k(r)\mathrm{d}r\right]\cdot \left[\int_{s}^{s+3L}f_k(r)\mathrm{d}r\right]^{-1}.
    \label{relative_area_left_definition}
\end{align}
Computing the above integrals, $\pazocal{A}_k(s,3L)$ can alternatively be expressed
\begin{align}
    \pazocal{A}_k(s,3L) = &\left[\sum_{n=0}^{k}\frac{1}{n!}\big(s^n-e^{-L}(s+L)^n+e^{-2L}(s+2L)^n-e^{-3L}(s+3L)^n\big)\right]\nonumber\\
    &\cdot \left[\sum_{n=0}^{k}\frac{1}{n!}\big(s^n-e^{-3L}(s+3L)^n\big)\right]^{-1}.
    \label{relative_area_left_identity}
\end{align}
Observe that $\pazocal{A}_k(s,3L)$ is independent of the starting point $s$ precisely when $k=0$. In particular, 
\begin{align}
    \pazocal{A}_0(3L):= \pazocal{A}_0(s,3L) =  \frac{\big(1+e^{-2L}\big)\big(1-e^{-L}\big)}{1-e^{-3L}}.
    \label{relative_area_left_lambda_0_alternative_identity}
\end{align}
For this reason, calculations with regard to $\lambda_0(\mathcal{C}_n(R))$ are significantly simpler than for the remaining eigenvalues.

We begin by computing asymptotic estimates for the eigenvalue $\lambda_0(\mathcal{C}_n(R))$ in section \ref{estimates_for_the_first_eigenvalue}. In section \ref{common_upper_bound_for_the_eigenvalues} we utilize the relative areas $\pazocal{A}_k(s,L)$ to determine a common upper bound for the eigenvalues. Here, Lemma \ref{lemma_non_shifted_iterates_subadditivity} is noteworthy as it relies on the subadditivity of the Cantor function.

\subsubsection{Estimates for the First Eigenvalue $\lambda_0(\mathcal{C}_n(R))$}
\label{estimates_for_the_first_eigenvalue}
Due to the fact that the relative area $\pazocal{A}_0(s,3L)$ is independent of the starting point $s$, we obtain the recursive relation
\begin{align*}
    \lambda_{0}(\mathcal{C}_{n+1}(R)) = \pazocal{A}_{0}(\pi R^2/3^{n}) \lambda_{0}(\mathcal{C}_{n}(R)) \ \text{for } n=0,1,2,\dots,
\end{align*}
which in return means
\begin{align}
    \lambda_{0}(\mathcal{C}_{n}(R)) &= \lambda_0(\mathcal{C}_0(R))\prod_{j=0}^{n-1}\pazocal{A}_0(\pi R^2/3^j)\nonumber\\
    &= \left(1-e^{-\pi R^2}\right)\prod_{j=0}^{n-1}\pazocal{A}_0(\pi R^2/3^j). \ \ \ \ \ \ \ \ \ \ \ \ \ \ \ \ \ \ \ 
    \label{eigenvalue_lambda_0_cantor_set_rel_prod}
\end{align}
From here we are able to formulate a precise asymptotic estimate for the first eigenvalue.

\begin{proposition}
There exist positive, finite constants $c_1 \leq c_2$ such that for each $n=0,1,2,\dots$
\begin{align*}
    c_1 \leq \frac{\Big(2\pi R^2+1\Big)^{\frac{\ln{2}}{\ln{3}}}}{2^n\big({1-e^{-\pi R^2/3^n}}\big)}\cdot \lambda_0(\mathcal{C}_n(R))
    \leq c_2 \ \ \forall \ \pi R^2\in[0,3^n/2].
\end{align*}
\begin{proof}
Combine the two identities \eqref{relative_area_left_lambda_0_alternative_identity}, \eqref{eigenvalue_lambda_0_cantor_set_rel_prod} to obtain
\begin{align*}
    \lambda_0(\mathcal{C}_n(R)) = \left(1-e^{-\pi R^2/3^n}\right)\prod_{j=1}^{n}\left(1+e^{-2\pi R^2/3^j}\right) \ \text{for } n=0,1,2,\dots
\end{align*}
By the latest result, it is sufficient to show that 
\begin{align*}
    c_1\leq \Big(2\pi R^2+1\Big)^{\frac{\ln{2}}{\ln{3}}} \prod_{j=1}^{n}\frac{1}{2}\left(1+e^{-2\pi R^2/3^j}\right)\leq c_2 \ \ \forall \  \pi R^2 \in [0,3^n/2].
\end{align*}
Exchange the product for a sum, and the above inequality is equivalent to 
\begin{align*}
    \ln{c}_1 \leq \sum_{j=1}^{n}\ln\left(1+e^{-x/3^j}\right)-\left(n - \frac{\ln(x+1)}{\ln 3}\right)\ln 2 \leq \ln{c}_2 \ \forall \ x \in [0,3^n].
\end{align*}
\noindent The last inequality relies on two claims 
\begin{enumerate}[label =  (\roman*),itemsep=0.4ex, before={\everymath{\displaystyle}}]
    \item 
    $\sup_{y\in[0,1]}\sum_{j=1}^{\infty}\left[\ln\left(1+y^{1/3^j}\right)-y^{1/3^j}\ln(2)\right] < \infty$, and
    \item there exist finite constants $\gamma_1\leq \gamma_2$ such that
    \begin{align*}
        \gamma_1 \leq \sum_{j=1}^{n}e^{-x/3^j}-\left(n-\frac{\ln(x+1)}{\ln(3)}\right) \leq \gamma_2 \ \text{for } x \in [0,3^n].\ \ \ \ \ \ \ \ \ \ \ \ \ \ \ \ \ \ \ \ 
    \end{align*}
\end{enumerate}
Although each claim is not difficult to verify, precise arguments are somewhat technical (see Appendix B in \cite{HelgeKnutsen_master_thesis} for details).

\end{proof}
\label{proposition_asymptotic_estimates_lambda_0}
\end{proposition}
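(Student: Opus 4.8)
The plan is to reduce the statement to a one-variable estimate via the closed product form of $\lambda_0$, and then to analyse that estimate by isolating its logarithmic main term. First I would combine \eqref{relative_area_left_lambda_0_alternative_identity} with \eqref{eigenvalue_lambda_0_cantor_set_rel_prod}: in the product $\prod_{j=0}^{n-1}\pazocal{A}_0(\pi R^2/3^{j})$ the factors $1-e^{-\pi R^2/3^{j}}$ telescope between successive numerators and denominators, and the surviving denominator $1-e^{-\pi R^2}$ cancels the prefactor in \eqref{eigenvalue_lambda_0_cantor_set_rel_prod}, leaving
\[
\lambda_0(\mathcal{C}_n(R))=\bigl(1-e^{-\pi R^2/3^{n}}\bigr)\prod_{j=1}^{n}\bigl(1+e^{-2\pi R^2/3^{j}}\bigr).
\]
Dividing by $2^{n}\bigl(1-e^{-\pi R^2/3^{n}}\bigr)$ and substituting $x:=2\pi R^{2}$ (so $\pi R^{2}\in[0,3^{n}/2]$ becomes $x\in[0,3^{n}]$), the asserted double inequality is equivalent to $c_1\le e^{\Phi_n(x)}\le c_2$ for all $x\in[0,3^{n}]$, where
\[
\Phi_n(x):=\frac{\ln 2}{\ln 3}\,\ln(x+1)+\sum_{j=1}^{n}\Bigl[\ln\bigl(1+e^{-x/3^{j}}\bigr)-\ln 2\Bigr].
\]
Hence it suffices to bound $\Phi_n$ above and below by absolute constants on $[0,3^{n}]$; exponentiating then gives positive, finite $c_1\le c_2$.

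The heart of the argument is a splitting of $\ln(1+\cdot)$ into a linear piece and a small, absolutely summable remainder. Put $g(u):=\ln(1+u)-u\ln 2$; then $g(0)=g(1)=0$ and $g''<0$, so $g$ is concave with equal endpoint values, whence $0\le g(u)$ for $u\in[0,1]$, while the tangent lines at $u=0$ and $u=1$ give $g(u)\le(1-\ln 2)u$ and $g(u)\le(\ln 2-\tfrac12)(1-u)$; in particular $0\le g(u)\le(1-\ln 2)\min\{u,1-u\}$ on $[0,1]$. Using $\ln(1+e^{-x/3^{j}})=e^{-x/3^{j}}\ln 2+g(e^{-x/3^{j}})$ and recombining, I would write
\[
\Phi_n(x)=(\ln 2)\,\Psi_n(x)+G_n(x),\qquad
\Psi_n(x):=\frac{\ln(x+1)}{\ln 3}-\sum_{j=1}^{n}\bigl(1-e^{-x/3^{j}}\bigr),\qquad
G_n(x):=\sum_{j=1}^{n}g\bigl(e^{-x/3^{j}}\bigr).
\]
For the remainder, $1-e^{-u}\le u$ and the bound on $g$ give $0\le G_n(x)\le(1-\ln 2)\sum_{j\ge1}\min\{e^{-x/3^{j}},\,x/3^{j}\}$, and the last series is bounded uniformly in $x$: letting $j_0$ be the least integer with $3^{j_0}\ge x+1$, for $j\le j_0$ the argument $x/3^{j}$ is at least of order $3^{\,j_0-j}$ (outside a bounded range of $x$), so $\sum_{j\le j_0}e^{-x/3^{j}}$ is dominated by a fixed convergent geometric-type series, while for $j>j_0$ one has $\sum_{j>j_0}x/3^{j}\le\tfrac12$ because $3^{j_0}\ge x+1$. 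Thus $0\le G_n(x)\le M$ for an absolute constant $M$; this is claim (i).

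Claim (ii) — the uniform bound on $\Psi_n$ — is the substantive step: one must show $\sum_{j=1}^{n}\bigl(1-e^{-x/3^{j}}\bigr)=\log_3(x+1)+\pazocal{O}(1)$ for $x\in[0,3^{n}]$. With $j_0$ as above I would split the sum at $\min\{j_0,n\}$. For $j\le\min\{j_0,n\}$ the defect from $1$ is $\sum e^{-x/3^{j}}=\pazocal{O}(1)$ by the geometric estimate just used, so this block equals $\min\{j_0,n\}+\pazocal{O}(1)$; for $\min\{j_0,n\}<j\le n$ (which occurs only when $j_0<n$) we have $x/3^{j}<1$ and hence $\sum_{j>j_0}\bigl(1-e^{-x/3^{j}}\bigr)\le\sum_{j>j_0}x/3^{j}\le\tfrac12$. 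When $j_0\le n$ this gives $\sum_{j=1}^{n}\bigl(1-e^{-x/3^{j}}\bigr)=j_0+\pazocal{O}(1)=\log_3(x+1)+\pazocal{O}(1)$, since $j_0-1<\log_3(x+1)\le j_0$. When $j_0>n$ the sum equals $n+\pazocal{O}(1)$, and here the hypothesis $x\le 3^{n}$ is precisely what is needed: it forces $3^{n}\le 3^{\,j_0-1}<x+1\le 3^{n}+1$, so $\log_3(x+1)=n+\pazocal{O}(1)$, and again the two quantities agree up to $\pazocal{O}(1)$. Feeding claims (i) and (ii) back into the decomposition yields finite $\gamma_1\le\gamma_2$ with $(\ln 2)\gamma_1\le\Phi_n(x)\le(\ln 2)\gamma_2+M$ on $[0,3^{n}]$, so $c_1:=e^{(\ln 2)\gamma_1}$ and $c_2:=e^{(\ln 2)\gamma_2+M}$ complete the proof.

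The step I expect to be the main obstacle is claim (ii). Morally the sum $\sum_{j=1}^{n}(1-e^{-x/3^{j}})$ merely counts the indices $j$ with $3^{j}\lesssim x$ and so equals $\log_3 x+\pazocal{O}(1)$; making this rigorous forces one to pin down the transition index $j_0\approx\log_3 x$, to treat the boundary regimes $x\approx 0$ and $x\approx 3^{n}$ — the latter being exactly where the constraint $\pi R^{2}\le 3^{n}/2$ enters, without which $\Phi_n(x)\to-\infty$ — and to keep every implied constant independent of both $n$ and $x$. The uniform-in-$x$ summation behind claim (i) requires the same geometric-series bookkeeping but is less delicate. (Full details of (i)--(ii) are the technical content of Appendix B in \cite{HelgeKnutsen_master_thesis}.)
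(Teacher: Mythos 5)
Your proof follows the paper's argument exactly --- the same telescoping to the closed product form of $\lambda_0(\mathcal{C}_n(R))$, the same reduction to bounding the logarithm of $(x+1)^{\ln 2/\ln 3}\prod_{j=1}^n\tfrac12(1+e^{-x/3^j})$, and the same splitting into claims (i) and (ii) --- and in addition supplies correct proofs of those two claims, which the paper defers to Appendix B of \cite{HelgeKnutsen_master_thesis}. The only slip is in your closing aside: without the constraint $x\le 3^n$ one has $\Phi_n(x)\to+\infty$ as $x\to\infty$ with $n$ fixed (so it is the upper bound $c_2$ that fails), not $\Phi_n(x)\to-\infty$.
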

\subsubsection{Common Upper Bound for the Eigenvalues}
\label{common_upper_bound_for_the_eigenvalues}
Next, we search for a common upper bound for the eigenvalues expressed in terms of the first eigenvalue. Note that if all the relative areas $\pazocal{A}_k(s,L)$ were bounded by $\pazocal{A}_0(L)$ regardless of starting point $s>0$ and interval length $L>0$, we would conclude that $\lambda_0(\mathcal{C}_n(R))$ is always the largest eigenvalue. As it turns out, this is not the case, e.g.,
\begin{align*}
    \lim_{L\to 0}\pazocal{A}_k(0,L)>\pazocal{A}_0(L) \ \ \text{for } k=2,3,4,\dots
\end{align*}
Instead, we compare the relative areas $\pazocal{A}_k(s,3L)$ and $\pazocal{A}_0(3L)$ when $s\geq k$. 
\begin{lemma}
Let $\{\pazocal{A}_k\}_k$ be given by \eqref{relative_area_left_definition}. Then 
\begin{align*}
    \pazocal{A}_k(s,3L)\leq \pazocal{A}_0(3L) \ \ \forall \ s\geq k, \ L>0\ \text{and } k=0,1,2,\dots
\end{align*}
\begin{proof}
Consider the derivative of $\pazocal{A}_k(s,L)$ with respect to $s$, which yields
\begin{align*}
    \frac{\partial \pazocal{A}_k}{\partial s}(s,3L)= N_k(s,L) \left[\int_{s}^{s+3L}f_k(r)\mathrm{d}r\right]^{-2},
\end{align*}
for some function
\begin{align*}
    N_k(s,L) = \Big(f_k(s+L)-f_k(s+2L)\Big)\int_{s}^{s+3L}f_k(r)\mathrm{d}r&\\
    -\Big(f_k(s)-f_k(s+3L)\Big)\int_{s+L}^{s+2L}f_k(r)\mathrm{d}r&.
\end{align*}
By identity \eqref{relative_area_left_identity}, it is clear that $\lim_{s\to \infty}\pazocal{A}_k(s,3L) = \pazocal{A}_0(3L)$ for all $L>0$. Thus, it suffices to show that $N_k(s,L)\geq 0$ for all $s\geq k$ and $L>0$ for $k=1,2,3,\dots$
Introduce the function 
\begin{align}
    \Phi_k(r,s,L) := &\Big[f_k(r+s)f_k(s+L)-f_k(r+s+L)f_k(s)\Big]\nonumber\\
    &+\Big[f_k(r+s+L)f_k(s+L)-f_k(r+s)f_k(s+2L)\Big].
    \label{proposition_phi_def}
\end{align}
Then we may express $N_k(s,L)$ as a single integral over $[0,L]$ such that
\begin{align*}
    N_k(s,L) = \int_{0}^{L}\Big(\Phi_k(r,s,L)-\Phi_k(r,s+L,L)\Big)\mathrm{d}r.
\end{align*}
Hence, the function $N_k(s,L)$ is \textit{positive} for all $s\geq k$ if the derivative of $\Phi(r,s,L)$ with respect to $s$ is \textit{negative}. Consider each of the square bracket terms $[\dots]$ in definition \eqref{proposition_phi_def} separately, that is 
\begin{align*}
    \Psi_k(r,s,L,y) := &f_k(r+s+y)f_k(s+L)\nonumber\\
    &-f_k(r+s+L-y)f_k(s+2y) \ \text{for} \ y \in \{0,L\},
\end{align*}
such that $\Phi_k(r,s,L) = \Psi_k(r,s,L,0)+\Psi_k(r,s,L,L)$. For each $y\in\{0,L\}$ the desired derivative properties of $\Psi_k$ are easy to determine. This is partly due the fact that the arguments of $f_k(\cdot)$ in each term of $\Psi_k$ add to a \textit{fixed} value. Computations then yield   
\begin{align*}
    \frac{\partial \Psi_k}{\partial s}(r,s,L,y) \leq 0 \ \ \forall \ s\geq k \ \text{and } y \in \{0,L\}    
\end{align*}
(see Appendix C in \cite{HelgeKnutsen_master_thesis} for details).

\end{proof}
\end{lemma}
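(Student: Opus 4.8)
The plan is to reduce everything to one monotonicity statement. Fix $k\ge 1$ and $L>0$ (the case $k=0$ is the equality recorded in \eqref{relative_area_left_lambda_0_alternative_identity}, since $\pazocal{A}_0(s,3L)$ does not depend on $s$). I claim that $s\mapsto\pazocal{A}_k(s,3L)$ is nondecreasing on $[k,\infty)$ and that $\lim_{s\to\infty}\pazocal{A}_k(s,3L)=\pazocal{A}_0(3L)$; together these give $\pazocal{A}_k(s,3L)\le\pazocal{A}_0(3L)$ for all $s\ge k$. The limit needs no work: in the closed form \eqref{relative_area_left_identity} the $n=k$ terms dominate both numerator and denominator as $s\to\infty$, and the ratio of their leading coefficients is $(1-e^{-L})(1+e^{-2L})(1-e^{-3L})^{-1}=\pazocal{A}_0(3L)$. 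So the whole content is the monotonicity in $s$.

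For that I would differentiate \eqref{relative_area_left_definition} directly. Writing $A$, $D$, $C$ for the integrals of $f_k$ over $[s,s+L]$, $[s+L,s+2L]$, $[s+2L,s+3L]$ and $B=A+D+C$, we have $\pazocal{A}_k(s,3L)=(A+C)/B=1-D/B$; using $\partial_s A=f_k(s+L)-f_k(s)$ together with the analogous formulas for $\partial_s D,\partial_s C,\partial_s B$, the quotient rule (after substituting $A+C=B-D$) collapses to
\[
\frac{\partial\pazocal{A}_k}{\partial s}(s,3L)=\frac{N_k(s,L)}{B^{2}},\qquad
N_k(s,L)=\big(f_k(s+L)-f_k(s+2L)\big)B-\big(f_k(s)-f_k(s+3L)\big)D.
\]
So it suffices to prove $N_k(s,L)\ge 0$ for all $s\ge k$, $L>0$.

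To handle $N_k$ I would turn it into a single integral over $[0,L]$: substituting $B=\int_0^L[f_k(s+r)+f_k(s+L+r)+f_k(s+2L+r)]\,\mathrm{d}r$ and $D=\int_0^L f_k(s+L+r)\,\mathrm{d}r$ and collecting terms gives $N_k(s,L)=\int_0^L\big(\Phi_k(r,s,L)-\Phi_k(r,s+L,L)\big)\,\mathrm{d}r$ with
\[
\Phi_k(r,s,L)=\big[f_k(r+s)f_k(s+L)-f_k(r+s+L)f_k(s)\big]+\big[f_k(r+s+L)f_k(s+L)-f_k(r+s)f_k(s+2L)\big].
\]
Hence it is enough that $s\mapsto\Phi_k(r,s,L)$ be nonincreasing for $r\in[0,L]$ and $s\ge k$. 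Split $\Phi_k$ into its two brackets $\Psi_k(\cdot,0)$ and $\Psi_k(\cdot,L)$, where $\Psi_k(r,s,L,y):=f_k(r+s+y)f_k(s+L)-f_k(r+s+L-y)f_k(s+2y)$. The key structural fact is that, inside each of the two products in $\Psi_k$, the two arguments of $f_k$ sum to the same $s$-dependent value $T=r+2s+L+y$, while the two differences $\delta_1,\delta_2$ do not involve $s$. Since $f_k(a)f_k(b)=(ab)^k e^{-(a+b)}/(k!)^2$ and $ab=(T^2-\delta^2)/4$ when $a-b=\delta$ and $a+b=T$, each $\Psi_k$ becomes a difference $\psi_{\delta_1}(T)-\psi_{\delta_2}(T)$ of functions of the single variable $T$, with $\psi_\delta(T)=(T^2-\delta^2)^k e^{-T}/(4^k(k!)^2)$, so $\partial_s\Psi_k=2\big(\psi_{\delta_1}'(T)-\psi_{\delta_2}'(T)\big)$. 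The lemma is thereby reduced to the one-variable inequality $\psi_{\delta_1}'(T)\le\psi_{\delta_2}'(T)$, to be verified for $y\in\{0,L\}$.

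I expect this last inequality to be the main obstacle — the one step that is genuinely analytic rather than bookkeeping, though still elementary. Since $\psi_\delta'(T)$ is proportional to $(T^2-\delta^2)^{k-1}\big(2kT-(T^2-\delta^2)\big)e^{-T}$, one checks that $x\mapsto(T^2-x)^{k-1}(2kT-T^2+x)$ is nondecreasing on $[0,\,T^2-2(k-1)T]$; moreover, for $r\in[0,L]$ one has $\delta_1^2\le\delta_2^2$ in both cases $y\in\{0,L\}$, and the hypothesis $s\ge k$ is precisely what guarantees $T^2-2(k-1)T\ge\delta_2^2$ — equivalently, it keeps all of $s,s+L,s+2L,s+3L$ on the decreasing branch $[k,\infty)$ of $f_k$, whose maximum sits at $r=k$ (cf.\ \eqref{right_slope_less_than_left_slope_f_k}). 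Carrying these slightly fussy polynomial estimates through for $y=0$ and $y=L$ is the computation one would relegate to an appendix.
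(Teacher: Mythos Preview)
Your argument is correct and follows the paper's proof essentially step for step: the same quotient-rule derivative, the same $N_k$, the same $\Phi_k$ and $\Psi_k$ decomposition, and the same observation that the two $f_k$-arguments in each product of $\Psi_k$ have a common sum. Your $\psi_\delta(T)$ reformulation and the monotonicity of $x\mapsto (T^2-x)^{k-1}(2kT-T^2+x)$ on $[0,T^2-2(k-1)T]$ are exactly the content of the computation the paper defers to its Appendix~C, so you have in fact filled in that deferred step.
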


Thus, for any shifted $n$-iterate Cantor set $C_n(\pi R^2)+s+k$ for $s\geq 0$, we obtain the recursice inequality
\begin{align*}
    \int_{C_{n+1}(\pi R^2)+s+k}f_k(r)\mathrm{d}r\leq \pazocal{A}_0(\pi R^2/3) \int_{C_n(\pi R^2)+s+k}f_k(r)\mathrm{d}r \ \ \text{for } n=0,1,2,\dots
\end{align*}
Furthermore, by result \eqref{maximal_localization_disk} and \eqref{eigenvalue_lambda_0_cantor_set_rel_prod},
\begin{align}
    \int_{C_n(\pi R^2)+s+k}f_k(r)\mathrm{d}r &\leq \int_{C_0(\pi R^2)+s+k}f_k(r)\mathrm{d}r \prod_{j=0}^{n-1}\pazocal{A}_0(\pi R^2/3^j)\nonumber\\
    &\leq \lambda_0(\mathcal{C}_0(R)) \prod_{j=0}^{n-1}\pazocal{A}_0(\pi R^2/3^j) = \lambda_0(\mathcal{C}_n(R)).
    \label{shifted_integral_cantor_set_inequality}
\end{align}
In the next lemma we relate the integrals of $f_k$ over the shifted $n$-iterate Cantor sets to the non-shifted iterates.
\begin{lemma}
Let $L>0$. Then for every fixed $k,n=0,1,2,\dots$, we have 
\begin{enumerate}[label =  \textnormal{(\Alph*)},itemsep=0.4ex, before={\everymath{\displaystyle}}]
    \item $\int_{C_n(L)\cap[k,\infty[}f_k(r)\mathrm{d}r\leq \int_{C_n(L)+k}f_k(r)\mathrm{d}r\ $ and
    \item $\int_{C_n(L)\cap [0,k]}f_k(r)\mathrm{d}r\leq \int_{C_n(L)+k}f_k(r)\mathrm{d}r$.
\end{enumerate}
\begin{proof}
For case (A), since $f_k(r)$ is monotonically decreasing for $r>k$, it suffices to verify
\begin{align*}
    |C_n(L)\cap[k,r]|\leq |(C_n(L)+k)\cap[k,r]| \ \ \forall \ r\geq k.
\end{align*}
In terms of the Cantor function $\pazocal{G}_{L,n}$ in \eqref{cantor_function}, the above claim reads
\begin{align*}
    \pazocal{G}_{L,n}(r)-\pazocal{G}_{L,n}(r) \leq \pazocal{G}_{L,n}(r-k) \ \ \forall \ r\geq k,
\end{align*}
which is the same subadditivity property as in \eqref{subadditivity_cantor_function}.

For case (B), consider the reflection of elements $C_n(L)\cap[0,k]$ about the point $k$, that is, consider the subset
\begin{align}
    \pazocal{R}_{n,k} := \{ r\geq k \ | \ 2k-r \in C_n(L)\cap[0,k]\},
    \label{reflected_set}
\end{align}
By result \eqref{right_slope_less_than_left_slope_f_k}, we have that
\begin{align*}
    \int_{C_n(L)\cap[0,k]}f_k(r)\mathrm{d}r \leq \int_{\pazocal{R}_{n,k}}f_k(r)\mathrm{d}r.
\end{align*}
Similarly to (A), it suffices to show that 
\begin{align}
    | \pazocal{R}_{n,k}\cap[k,r]| \leq |(C_n(L)+k)\cap[k,r]| = L\cdot \pazocal{G}_{n,L}(r-k) \ \ \forall \ r\geq k. 
    \label{lemma_cantor_set_start_at_k_bound_B_sufficient}
\end{align}
By definition \eqref{reflected_set}, the set $\pazocal{R}_{n,k}$ satisfies
\begin{align*}
    |\pazocal{R}_{n,k}\cap[k,r]| &= |C_n(L) \cap [2k-r,k]| = L \big( \pazocal{G}_{L,n}(k) - \pazocal{G}_{L,n}(2k-r)\big).
    \end{align*}
Apply subadditivity of $\pazocal{G}_{L,n}$ to $\pazocal{G}_{L,n}(k) = \pazocal{G}_{L,n}((r-k) + (2k-r))$, from which claim \eqref{lemma_cantor_set_start_at_k_bound_B_sufficient} follows.

\end{proof}
\label{lemma_non_shifted_iterates_subadditivity}
\end{lemma}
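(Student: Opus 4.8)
The plan is to reduce both inequalities to a single one-dimensional rearrangement principle combined with the subadditivity of the Cantor function. Throughout I would write $G(x):=|C_n(L)\cap[0,x]| = |C_n(L)|\,\pazocal{G}_{L,n}(x)$; being a positive scalar multiple of $\pazocal{G}_{L,n}$, the function $G$ is nondecreasing and, by \eqref{subadditivity_cantor_function}, subadditive. The first step is to record the auxiliary fact: if $g\ge 0$ is nonincreasing on $[k,\infty)$ with $g(r)\to 0$, and $A,B\subseteq[k,\infty)$ are measurable with $|A\cap[k,r]|\le|B\cap[k,r]|$ for all $r\ge k$, then $\int_A g\le\int_B g$. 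This follows from $g(r)=\int_r^\infty\big(-g'(t)\big)\,\mathrm{d}t$ together with Fubini, which turns $\int_A g$ and $\int_B g$ into expressions of the form $\int_k^\infty\big(-g'(t)\big)\,|A\cap[k,t]|\,\mathrm{d}t$ with a nonnegative weight, so the hypothesis gives the inequality pointwise in $t$. A direct differentiation shows $f_k$ is nonincreasing on $[k,\infty)$, so the principle applies with $g=f_k$.

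For part (A), take $A=C_n(L)\cap[k,\infty)$ and $B=C_n(L)+k$. By the principle it suffices to verify $|A\cap[k,r]|\le|B\cap[k,r]|$ for every $r\ge k$; written via $G$ this reads $G(r)-G(k)\le G(r-k)$, which is exactly subadditivity of $G$ applied to the splitting $r=(r-k)+k$.

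For part (B) I would first reflect $C_n(L)\cap[0,k]$ about the point $k$. With $\pazocal{R}_{n,k}=\{r\ge k : 2k-r\in C_n(L)\cap[0,k]\}$, the map $x\mapsto 2k-x$ is a measure-preserving bijection of $C_n(L)\cap[0,k]$ onto $\pazocal{R}_{n,k}$, and for $x\in[0,k]$ estimate \eqref{right_slope_less_than_left_slope_f_k} (with $r=k-x$) gives $f_k(x)\le f_k(2k-x)$; a change of variables then yields $\int_{C_n(L)\cap[0,k]}f_k\le\int_{\pazocal{R}_{n,k}}f_k$. Since $\pazocal{R}_{n,k}\subseteq[k,2k]$, the rearrangement principle reduces the remaining inequality $\int_{\pazocal{R}_{n,k}}f_k\le\int_{C_n(L)+k}f_k$ to $|\pazocal{R}_{n,k}\cap[k,r]|\le|(C_n(L)+k)\cap[k,r]|$ for all $r\ge k$. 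The right side equals $G(r-k)$, while $|\pazocal{R}_{n,k}\cap[k,r]| = |C_n(L)\cap[\max\{2k-r,0\},k]|$, which is $G(k)-G(2k-r)$ for $k\le r\le 2k$, is $G(k)$ for $r\ge 2k$, and is $0$ for $r<k$. The case $r\ge 2k$ is monotonicity of $G$; for $k\le r\le 2k$, setting $a=r-k$ and $b=2k-r$ so that $a+b=k$, the required bound $G(a+b)-G(b)\le G(a)$ is once more subadditivity.

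The only nonelementary input is the subadditivity of the Cantor function, which I take as given from Dobo\v{s} \cite{JosefDobos_1996}; the one place demanding care is the case analysis in computing $|\pazocal{R}_{n,k}\cap[k,r]|$, where the regimes $r<k$, $k\le r\le 2k$, and $r\ge 2k$ must be separated before subadditivity is invoked. I would state the rearrangement principle as a short standalone observation, since it makes parts (A) and (B) structurally identical apart from the reflection step that (B) additionally needs.
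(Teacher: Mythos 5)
Your proof is correct and follows essentially the same route as the paper: reduce (A) and (B) to cumulative-measure comparisons using the monotonicity of $f_k$ on $[k,\infty)$ (your explicit layer-cake principle is the step the paper leaves implicit in ``it suffices to verify''), handle (B) by reflecting about $k$ via \eqref{right_slope_less_than_left_slope_f_k}, and close both parts with the subadditivity of the Cantor function. The case split you supply in computing $|\pazocal{R}_{n,k}\cap[k,r]|$ for $r\ge 2k$ versus $k\le r\le 2k$ is a correct elaboration of a step the paper states without that separation.
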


Now, combine inequality \eqref{shifted_integral_cantor_set_inequality} with Lemma \ref{lemma_non_shifted_iterates_subadditivity}, to conclude
\begin{align*}
    \lambda_k(\mathcal{C}_n(R)) \leq 2\lambda_0(\mathcal{C}_n(R)) \ \ \forall \ \ k,n\geq 0,
\end{align*}
which is a restatement of Theorem \ref{bound_operator_norm_thm1}. Theorem \ref{asymptotic_bound_operator_norm_thm2} follows by applying the estimates in Proposition \ref{proposition_asymptotic_estimates_lambda_0} to Theorem \ref{bound_operator_norm_thm1}. 


\addcontentsline{toc}{section}{References}
\bibliographystyle{unsrt}
\bibliography{bibliography}

\end{document}